\newtheorem{theorem}{Theorem}[section]
\newtheorem{lemma}[theorem]{Lemma}
\newtheorem{proposition}[theorem]{Proposition}
\newtheorem{remark}[theorem]{Remark}
\numberwithin{equation}{section}
\newcommand{\be}{\begin{equation}}
\newcommand{\ee}{\end{equation}}
\newcommand{\bes}{\begin{equation*}}
\newcommand{\ees}{\end{equation*}}
\def\E{\bE}
\def\P{\bP} %% environment measure
\def\bE{\mathbb{E}}
\newcommand{\R}{\mathbf{R}}
\renewcommand{\d}{{\rm d}}
\renewcommand{\geq}{\geqslant}
\renewcommand{\leq}{\leqslant}
\renewcommand{\P}{\mathrm{P}}
\def\m1{\mathbf{1}}
\title{Remarks on a fractional-time stochastic equation.}
\author{Mohammud Foondun\\University of Strathclyde }
\date{}
\begin{document}
\maketitle

\begin{abstract}
We consider a class of fractional time stochastic equation defined on a bounded domain and show that the presence of the time derivative induces a significant change in the qualitative behaviour of the solutions. This is in sharp contrast with the phenomenon showcased in \cite{FoonNual} and extented in \cite{Bin} and \cite{Eu}. We also show that as one {\it tunes off the fractional} in the fractional time derivative, the solution behaves more and more like its {\it usual} counterpart.

\end{abstract}
 {\bf Keywords:} Fractional stochastic equation, space-time white noise, space colored noise.
%\newpage
\section{Introduction and main results}
Consider the following stochastic heat equation on the interval $(0,L)$:
\begin{equation}\label{dirichlet}
\left|\begin{aligned}
&\partial_t v_t(x)=\frac{1}{2}\partial _{xx}v_t(x)+\lambda \sigma(v_t(x))\dot{W}(t,\,x)\;\; \text{for}\;\; 0<x<L\;\;\text{and}\; \;t>0\\
&v_t(0)=v_t(L)=0 \quad \text{for}\quad t>0,
\end{aligned}\right.
\end{equation}
where the initial condition $u_0:[0,L] \rightarrow \R_+$ is a non-negative bounded function with positive support inside the interval $[0,\,L]$. $\dot{W}$ denotes a space-time white noise and $\sigma:\R\rightarrow \R$ is a globally Lipschitz function satisfying $l_\sigma|x|\leq  |\sigma(x)| \leq L_\sigma|x|$ where $l_\sigma$ and $L_\sigma$ are positive constants. The positive parameter $\lambda$ is called {\it the level of the noise} and will play an important role in this paper.  We follow Walsh \cite{Walsh} to say that $v_t$ is a mild solution to \eqref{dirichlet} if \begin{equation}\label{mild:dirichlet}
v_t(x)=
(\mathcal{P}_Du_0)_t(x)+ \lambda \int_0^L\int_0^t p_D(t-s,\,x,\,y)\sigma(v_s(y))W(\d s\,,\d y),
\end{equation}
where $p_D(t,x,y)$ denotes the Dirichlet heat kernel, and
\begin{equation*}
(\mathcal{P}_D u_0)_t(x):=\int_0^L p_D(t,\,x,\,y) u_0(y)\,\d y.
\end{equation*}
In the above $p_D(t,\,x,\,y)$ is the heat kernel of the Dirichlet Laplacian and corresponds to the probability density function of Brownian motion killed upon exiting the domain $[0,\,L]$.  The proofs for existence-uniqueness and various other relevant technicalities can be found in \cite{Walsh} or \cite{minicourse}. We emphasise that the use of the subscript in $v_t$ and in other upcoming quantities indicates dependence on $t$ rather than derivative with respect to $t$. Throughout this paper, we will also assume that the spatial dimension is 1.

 In \cite{FoonNual}, it was shown that for small $\lambda$, the second moment decays exponentially fast while for large $\lambda$, the second moment grows exponentially fast. This was sharpened by using precise heat kernel estimates in \cite{Bin} and \cite{Eu}. A main aim of this note is to show that if one replaces the usual derivative by a fractional time derivative, this {\it phase transition} no longer holds and a more complicated picture emerges. From a practical point of view, our results are relevant since fractional time derivatives are often used in modelling memory in various systems. So knowing that the use of such derivatives can induce significant change in the qualitative properties of the solution is therefore very important. For results in the deterministic case, see \cite{SFMN} and references therein.

  Consider
\begin{equation}\label{dirichlet-2}
\left|\begin{aligned}
&\partial^\beta_t u_t(x)=\frac{1}{2}\partial _{xx}u_t(x)+I_t^{1-\beta}[\lambda \sigma(u_t(x))\dot{W}(t,\,x)]\;\; \text{for}\;\; 0<x<L\;\;\text{and}\; \;t>0\\
&u_t(0)=u_t(L)=0 \quad \text{for}\quad t>0,
\end{aligned}\right.
\end{equation}
where $\partial^\beta_t$ is the Caputo derivative with $\beta \in(0,\,1)$ and $I_t^{1-\beta}$ is the Riesz fractional integral operator given by 
\begin{equation*}
\partial^\beta_t f(t)=\frac{1}{\Gamma(1-\beta)}\int_0^t \frac{\partial
f(r)}{\partial r}\frac{\d r}{(t-r)^\beta}
\end{equation*}
and
\begin{equation*}
I^{1-\beta}_{t}f(t):=\frac{1}{\Gamma(1-\beta)}\int_{0}^{t}(t-\tau)^{-\beta}f(\tau)\d\tau
\end{equation*}
respectively.
All other parameters and conditions are as mentioned above.  We again use Walsh theory to make sense of the above equation via the following integral equation.
\begin{equation}\label{mild:dirichlet2}
u_t(x)=
(\mathcal{G}_Du_0)_t(x)+ \lambda \int_0^L\int_0^t G_D(t-s,\,x,\,y)\sigma(u_s(y))W(\d s\,,\d y),
\end{equation}
where $G_D(t,x,y)$ denotes the probability density function of the {\it slowed} killed Brownian motion associated with the fractional time operator. We will give more precise information about this later. The first term is defined by
\begin{equation*}
(\mathcal{G}_D u_0)_t(x):=\int_0^L G_D(t,\,x,\,y) u_0(y)\d y.
\end{equation*}
Since we will be studying the \eqref{mild:dirichlet2}, the precise definition of the operators $\partial^\beta_t$ and $I_t^{1-\beta}$ will not play any direct role. However, they are important in explaining the connection between \eqref{mild:dirichlet2} and \eqref{dirichlet-2}. 
We are now ready to state our first main result.
\begin{theorem}\label{Theo1}
Let $u_t$ denote the unique solution to \eqref{mild:dirichlet2}.  Then no matter what $\lambda$ is, the second moment of $u_t$ cannot decay exponentially fast. In fact, if we further assume that $\beta\in (0,\,\frac{1}{2}]$, then as $t$ gets large, $\sup_{x\in [0,\,L]} \E|u_t(x)|^2 $ grows exponentially fast for any $\lambda.$
\end{theorem}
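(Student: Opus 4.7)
The plan rests on the spectral/subordination structure of $G_D$. If $(\mu_n,\phi_n)_{n\geq 1}$ denote the Dirichlet eigen-pairs of $-\tfrac{1}{2}\partial_{xx}$ on $[0,L]$ and $E_r$ is the inverse $\beta$-stable subordinator (independent of the killed Brownian motion), then
\[
G_D(t,x,y)=\sum_{n\geq 1}E_\beta\!\left(-\mu_n t^\beta/2\right)\phi_n(x)\phi_n(y)=\E\,p_D(E_t,x,y),
\]
where $E_\beta$ is the Mittag--Leffler function. The key asymptotic is $E_\beta(-z)\sim(z\Gamma(1-\beta))^{-1}$ as $z\to\infty$; so each time factor decays only polynomially, in stark contrast with the exponential decay of the classical modes.

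For the first claim I would note that the Walsh--It\^o isometry applied to \eqref{mild:dirichlet2} yields $\E|u_t(x)|^2\geq\bigl((\mathcal{G}_Du_0)_t(x)\bigr)^2$. Expanding in the basis $\{\phi_n\}$ isolates the first-mode term $E_\beta(-\mu_1 t^\beta/2)\,\langle u_0,\phi_1\rangle\,\phi_1(x)$, and $\langle u_0,\phi_1\rangle>0$ because $u_0$ is non-negative with positive support and $\phi_1>0$ on $(0,L)$. The asymptotic then produces $(\mathcal{G}_Du_0)_t(x)\gtrsim t^{-\beta}\phi_1(x)$ for any fixed $x\in(0,L)$, which immediately rules out exponential decay of $\E|u_t(x)|^2$ irrespective of $\lambda$.

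For the exponential-growth claim when $\beta\leq\tfrac{1}{2}$, I would set up a renewal-type inequality for $F(t):=\int_0^L\phi_1(y)^2\,\E|u_t(y)|^2\,dy$. Squaring the mild formula and using $|\sigma(z)|\geq l_\sigma|z|$ gives
\[
\E|u_t(x)|^2\geq\lambda^2 l_\sigma^2\!\int_0^t\!\!\int_0^L G_D(t-s,x,y)^2\,\E|u_s(y)|^2\,dy\,ds.
\]
Multiplying by $\phi_1(x)^2$, integrating in $x$, and substituting the lower bound $G_D(r,x,y)\geq c_1 E_\beta(-\mu_1 r^\beta/2)\phi_1(x)\phi_1(y)$ yields
\[
F(t)\geq F_0(t)+c_2\lambda^2\!\int_0^t K(t-s)F(s)\,ds,\qquad K(r):=E_\beta\!\left(-\mu_1 r^\beta/2\right)^2,
\]
with $F_0(t)>0$ coming from the deterministic piece. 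Since $K(r)\sim Cr^{-2\beta}$ at infinity and $2\beta\leq 1$, the Laplace transform $\hat K(\gamma)$ is continuous, strictly decreasing, with $\hat K(0^+)=+\infty$ and $\hat K(\infty)=0$; hence for every $\lambda>0$ there is a unique $\gamma(\lambda)>0$ satisfying $c_2\lambda^2\hat K(\gamma)=1$. A standard Laplace-transform argument (if $F$ grew slower than $e^{\gamma t}$, evaluating at $\gamma=\gamma(\lambda)$ would give $\tilde F(\gamma)\geq\tilde F(\gamma)+\tilde F_0(\gamma)$, a contradiction) forces $\limsup_{t\to\infty}t^{-1}\log F(t)\geq\gamma(\lambda)$, and the trivial bound $F(t)\leq L\|\phi_1\|_\infty^2\sup_x\E|u_t(x)|^2$ transfers the growth to the supremum.

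The main technical hurdle is the pointwise lower bound $G_D(r,x,y)\gtrsim E_\beta(-\mu_1 r^\beta/2)\phi_1(x)\phi_1(y)$, because the higher modes in the spectral sum have indefinite sign and cannot simply be dropped. I would obtain it via the subordination identity $G_D(r,x,y)=\E\,p_D(E_r,x,y)$ combined with the classical first-eigenmode bound $p_D(s,x,y)\geq\tfrac{1}{2}e^{-\mu_1 s/2}\phi_1(x)\phi_1(y)$ valid for $s\geq s_0$, and the subordinator tail estimate $\mathbb{P}(E_r<s_0)=\mathbb{P}(D_{s_0}>r)\sim c\,s_0\,r^{-\beta}$, which is $o(E_\beta(-\mu_1 r^\beta/2))$ once $s_0$ is chosen small enough and $r$ is large enough; this lets the small-$E_r$ contribution be absorbed into a fraction of the main term.
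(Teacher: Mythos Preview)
Your argument is essentially correct and leads to the same renewal-type inequality with kernel $K(r)=E_\beta(-\lambda_1 r^\beta)^2$, whose Laplace transform diverges at $0$ precisely when $2\beta\le 1$. However, the route you take is heavier than the paper's, and the reason is that you square the mild formulation \emph{pointwise} first and only then try to close the inequality. This forces you to establish the first-eigenmode lower bound $G_D(r,x,y)\gtrsim E_\beta(-\lambda_1 r^\beta)\phi_1(x)\phi_1(y)$, which you correctly identify as the main hurdle. Two remarks on that bound: (i) it cannot hold for all $r>0$, since for $x\ne y$ and $r\to 0$ the left side tends to $0$ while the right side tends to $c\phi_1(x)\phi_1(y)>0$; you only get it for $r\ge r_0$, which is still enough because the divergence of $\hat K(0^+)$ is a tail phenomenon; (ii) your claim that $\P(E_r<s_0)=o(E_\beta(-\mu_1 r^\beta/2))$ is not accurate---both quantities are of exact order $r^{-\beta}$---but choosing $s_0$ small makes the \emph{constant} in front of $r^{-\beta}$ small enough to be absorbed, which is what you really use.

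The paper sidesteps all of this by projecting the mild equation itself onto $\phi_1$ \emph{before} squaring: set $\langle u_t,\phi_1\rangle=\int_0^L u_t(x)\phi_1(x)\,dx$ and use the orthonormality of $\{\phi_n\}$ in the eigenfunction expansion of $G_D$ to obtain the exact identity
\[
\langle u_t,\phi_1\rangle=E_\beta(-\lambda_1 t^\beta)\langle u_0,\phi_1\rangle+\lambda\int_0^t\!\!\int_0^L E_\beta(-\lambda_1(t-s)^\beta)\phi_1(y)\sigma(u_s(y))\,W(ds,dy).
\]
Taking second moments via the Walsh--It\^o isometry and using $|\sigma(z)|\ge l_\sigma|z|$ together with Cauchy--Schwarz (to pass from $\int\phi_1^2\,\E|u_s|^2$ back to $\E\langle u_s,\phi_1\rangle^2$) immediately closes the renewal inequality for $\E\langle u_t,\phi_1\rangle^2$, with no pointwise kernel bound required. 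The first claim (no exponential decay for any $\lambda$) then falls out of the same identity: the stochastic-integral contribution to the second moment is nonnegative, and the deterministic piece $E_\beta(-\lambda_1 t^\beta)^2\langle u_0,\phi_1\rangle^2$ decays only like $t^{-2\beta}$. Your pointwise version of this first claim is also fine, but again the projection makes the isolation of the first mode automatic rather than requiring an argument about the sign of higher modes. In short: your approach works and is arguably more robust (it does not rely on spectral orthogonality, so it would adapt to settings where projection does not factor so cleanly), but here the paper's projection trick is both shorter and entirely elementary.
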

Our results strongly rely on the representation given by \eqref{eigen} which are in terms of the Mittag-Leffler function denoted by $E_\beta(\cdot)$. More precisely, we will use the fact that for large times, the function $E_\beta(\cdot)$ behaves like $t^{-\beta}$ as opposed to the exponential decay when one looks at the usual time derivative. In fact, this observation was the starting point of this paper. This polynomial decay also explains the requirement that $\beta\in (0,\,\frac{1}{2}]$ which turns out to be sharp. The strategy behind the proof is quite simple. We will look at the second moment of the solution. Roughly speaking, this means that after taking the second moment,  \eqref{mild:dirichlet2} gives
\begin{align*}
\text{Second moment}=\text{Term 1}+ \text{Term 2}.
\end{align*}
`Term 1' will essentially have polynomial decay due the behaviour of the Mittag-Leffler function described above.  `Term 2' will also be in terms of these functions but more important will be depend on the second moment in a linear way. This is due to the fact that $\sigma$ is globally Lipschitz. The general thrust of our method is to obtain some kind of non-linear renewal inequalities for quantity which involve the second moment. While this strategy is not new and was used in \cite{FoonNual} and \cite{FK} among others, here the main winning observation is that `Term 1' decays polynomially despite the Dirichlet boundary condition. The following shows that the condition that $\beta\in (0,\,\frac{1}{2}]$ is not merely a technical limitation of the proof of Theorem \ref{Theo1}.
\begin{theorem}\label{Theo2}
Suppose that $\beta\in (\frac{1}{2},\,1)$. Then there exist $\lambda_l$ and $\lambda_u$ strictly positive such that for all $\lambda>\lambda_u$,  $\sup_{x\in [0,\,L]} \E|u_t(x)|^2 $ grows exponentially fast as time gets large and for $\lambda<\lambda_l$,  the quantity $\sup_{t>0}\sup_{x\in [0,\,L]} \E|u_t(x)|^2$ is finite.
\end{theorem}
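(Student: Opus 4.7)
The strategy is to exploit the spectral representation
\begin{equation*}
G_D(t,x,y)=\sum_{k\geq 1}E_\beta(-\mu_k t^\beta)\,\phi_k(x)\phi_k(y),
\end{equation*}
where $(\mu_k,\phi_k)_{k\geq 1}$ are the Dirichlet eigendata of $-\tfrac12\partial_{xx}$ on $(0,L)$. The crucial analytic input is the Mittag-Leffler asymptotic $E_\beta(-\mu t^\beta)\sim (\Gamma(1-\beta)\mu t^\beta)^{-1}$ as $t\to\infty$, which makes $E_\beta^2(-\mu t^\beta)$ integrable on $(0,\infty)$ precisely when $\beta>\tfrac12$. Both halves of the theorem reduce to a linear Volterra inequality whose kernel is built from such squared functions.

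\textbf{Small-}$\lambda$\textbf{ boundedness.} Squaring \eqref{mild:dirichlet2}, applying Walsh's isometry, and using $|\sigma(y)|\leq L_\sigma|y|$ yields
\begin{equation*}
\E u_t^2(x) \leq (\mathcal{G}_Du_0)_t^2(x)+\lambda^2 L_\sigma^2\int_0^t\!\!\int_0^L G_D^2(t-s,x,y)\,\E u_s^2(y)\,\d y\,\d s.
\end{equation*}
Since $G_D$ is a subprobability kernel the first term is bounded by $\|u_0\|_\infty^2$. Set $N_t:=\sup_x\E u_t^2(x)$ and $\Phi(r):=\sup_x\sum_{k\geq 1}E_\beta^2(-\mu_k r^\beta)\phi_k^2(x)$, so that $\sup_x\int_0^L G_D^2(r,x,y)\,\d y=\Phi(r)$ by Parseval. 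Using $\|\phi_k\|_\infty\lesssim 1$ and $\mu_k\asymp k^2$, a term-by-term check gives $\Phi(r)=O(r^{-\beta/2})$ as $r\downarrow 0$ and $\Phi(r)=O(r^{-2\beta})$ as $r\to\infty$, so $\Phi\in L^1(0,\infty)$ under $\beta>\tfrac12$. The Volterra inequality $N_t\leq C+\lambda^2L_\sigma^2(\Phi*N)(t)$ is solved by the resolvent of $\lambda^2L_\sigma^2\Phi$; whenever $\lambda^2L_\sigma^2\|\Phi\|_{L^1}<1$ this resolvent is itself in $L^1$, and one obtains $\sup_{t>0}N_t<\infty$. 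This furnishes the threshold $\lambda_l$.

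\textbf{Large-}$\lambda$\textbf{ growth.} Project onto the ground state: put $u_t^{(1)}:=\langle u_t,\phi_1\rangle$ and $M_t:=\E|u_t^{(1)}|^2$. Testing \eqref{mild:dirichlet2} against $\phi_1$, squaring, and applying Walsh's isometry together with $|\sigma(y)|\geq l_\sigma|y|$ gives
\begin{equation*}
M_t \geq E_\beta^2(-\mu_1 t^\beta)\langle u_0,\phi_1\rangle^2 + \lambda^2 l_\sigma^2\int_0^t E_\beta^2(-\mu_1(t-s)^\beta)\,\E\!\int_0^L\phi_1^2(y)u_s^2(y)\,\d y\,\d s.
\end{equation*}
Since $\phi_1\geq 0$ on $(0,L)$, Cauchy-Schwarz against the constant $1$ yields $|u_s^{(1)}|^2\leq L\int_0^L\phi_1^2 u_s^2$. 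Setting $B:=\lambda^2 l_\sigma^2/L$, $K(r):=E_\beta^2(-\mu_1 r^\beta)$, and $A(t):=E_\beta^2(-\mu_1 t^\beta)\langle u_0,\phi_1\rangle^2$ (positive by the support assumption on $u_0$), we obtain the renewal lower bound $M_t\geq A(t)+B(K*M)(t)$. Because $K\in L^1(0,\infty)$ and its Laplace transform $\tilde K$ is continuous, strictly decreasing, with $\tilde K(0)=\|K\|_{L^1}$ and $\tilde K(\infty)=0$, the Malthusian equation $B\tilde K(\gamma^*)=1$ has a unique root $\gamma^*>0$ as soon as $B\|K\|_{L^1}>1$, i.e.\ for $\lambda>\lambda_u:=\bigl(L/(l_\sigma^2\|K\|_{L^1})\bigr)^{1/2}$. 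A Feller-type renewal comparison (multiplying through by $e^{-\gamma^* t}$ so that $Be^{-\gamma^* r}K(r)$ becomes a probability density) transfers this to the pointwise lower bound $M_t\gtrsim e^{\gamma^* t}$, and the reverse Cauchy-Schwarz inequality $M_t\leq L\sup_x\E u_t^2(x)$ lifts the growth to the supremum.

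\textbf{Main obstacle.} The technically heaviest step is the final renewal comparison: passing from $M_t\geq A+B(K*M)$ to a genuine exponential lower bound is not automatic from $K\in L^1$ alone. One must either invoke a Feller-type renewal theorem after the Malthusian exponential tilt, or iterate the inequality and sum the resulting geometric series through the Laplace transform; both routes require quantitative Mittag-Leffler estimates because $K=E_\beta^2(-\mu_1 r^\beta)$ has no closed form. The short-time integrability of $\Phi$ in the small-$\lambda$ half is a secondary but non-trivial ingredient, demanding the singular behaviour of $G_D$ at $t=0$ to be controlled against the condition $\beta/2<1$.
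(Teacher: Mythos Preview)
Your approach is essentially the one the paper takes: project onto the ground eigenfunction, use Cauchy--Schwarz to pass from $\int_0^L\phi_1^2 u_s^2$ back to $\langle u_s,\phi_1\rangle^2$, and reduce both halves to a Volterra-type renewal inequality governed by the integrability of $E_\beta^2(-\mu_k t^\beta)$ on $(0,\infty)$ when $\beta>1/2$.

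The only real difference is in how you close each half. For the large-$\lambda$ part you propose a Malthusian tilt plus a Feller-type renewal theorem, and you correctly flag this as the heaviest step. The paper avoids this entirely: it simply takes the Laplace transform of the inequality $M_t\geq A(t)+B(K*M)(t)$, obtaining
\[
\widehat M(\theta)\;\gtrsim\;\Lambda(\theta)\langle u_0,\phi_1\rangle^2+\lambda^2 l_\sigma^2\,\Lambda(\theta)\,\widehat M(\theta),
\]
where $\Lambda(\theta)=\int_0^\infty e^{-\theta t}E_\beta^2(-\mu_1 t^\beta)\,\d t$. Fixing any $\theta>0$ and then choosing $\lambda$ large enough that $\lambda^2 l_\sigma^2\Lambda(\theta)\geq 2$ forces $\widehat M(\theta)=\infty$, which yields exponential growth of $\sup_x\E|u_t(x)|^2$ at rate at least $\theta$. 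This bypasses the pointwise renewal comparison you worry about. For the small-$\lambda$ half the paper also uses the more direct route: instead of invoking the $L^1$ resolvent, it just takes $\sup_{t,x}$ on both sides of your Volterra inequality and uses $\lambda^2 L_\sigma^2\|\Phi\|_{L^1}\leq 1/2$ to absorb the right-hand side. Your arguments are correct, but the paper's are shorter and avoid the obstacle you identified.
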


The above two results show that for any fixed $\beta$, the solution to the stochastic heat equation \eqref{dirichlet-2} behaves very differently to that of \eqref{dirichlet}. One way to interpret the above result is that when the $\beta\in(0,\frac{1}{2}]$, the process $X_{E_t}^D$ defined in the next section do not reach the boundary fast enough and this allows the non-linear term to kick in. While for $\beta \in (\frac{1}{2},\,1)$, this process moves to the boundary fast enough so that the non-linear term do not have time to induce the exponential growth.

Theorem \ref{Theo2} also hints to the fact that as $\beta$ gets larger and larger, the solution to \eqref{dirichlet-2} behaves more and more like that of \eqref{dirichlet}.  This motivated the following result.

\begin{theorem}\label{Theo3}
Let $u_t$ and $u_t^{(\beta)}$ denote the solution to \eqref{dirichlet} and \eqref{dirichlet-2} respectively. The initial condition $u_0$ is the same for both equations. Then, for any $p\geq 2$, we have 
\begin{equation*}
\lim_{\beta\rightarrow 1}\sup_{x\in [0,\,L]}\E|u_t(x)- u_t^{(\beta)}(x)|^p=0.
\end{equation*}
\end{theorem}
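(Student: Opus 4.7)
The plan is to subtract the two mild formulations, estimate the difference in $L^p(\Omega)$ via the Burkholder-Davis-Gundy inequality for Walsh integrals, and close the estimate with a Gronwall argument. Writing
\begin{align*}
u_t(x)-u_t^{(\beta)}(x) &= \big[(\mathcal{P}_D-\mathcal{G}_D)u_0\big]_t(x)\\
&\quad+\lambda\int_0^t\!\!\int_0^L\!\big[p_D(t-s,x,y)-G_D(t-s,x,y)\big]\sigma(u_s(y))\,W(\d s,\d y)\\
&\quad+\lambda\int_0^t\!\!\int_0^L\!G_D(t-s,x,y)\big[\sigma(u_s(y))-\sigma(u_s^{(\beta)}(y))\big]\,W(\d s,\d y),
\end{align*}
BDG together with Minkowski's inequality in $L^{p/2}(\Omega)$ bounds $\E|u_t(x)-u_t^{(\beta)}(x)|^p$ above by a constant multiple of $A_\beta(t,x)^p+B_\beta(t,x)^{p/2}+C_\beta(t,x)^{p/2}$, where $A_\beta:=|(\mathcal{P}_D u_0-\mathcal{G}_D u_0)_t(x)|$, $B_\beta:=\lambda^2\int_0^t\!\!\int_0^L (p_D-G_D)^2(t-s,x,y)\,\|\sigma(u_s(y))\|_{L^p(\Omega)}^2\,\d y\,\d s$, and $C_\beta$ is the analogue with $G_D^2$ and $\|\sigma(u_s)-\sigma(u_s^{(\beta)})\|_{L^p(\Omega)}^2$. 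Jensen's inequality (together with $\int_0^t\!\!\int_0^L G_D^2\,\d y\,\d s<\infty$) absorbs the outer $p/2$-power in $C_\beta^{p/2}$ into a linear bound in $F_\beta(s):=\sup_y\E|u_s(y)-u_s^{(\beta)}(y)|^p$; the Gronwall step then reduces the problem to showing $\sup_x (A_\beta^p+B_\beta^{p/2})\to 0$ as $\beta\to 1$.

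For this convergence I would exploit the spectral representation \eqref{eigen}. If $\{\phi_n,\mu_n\}$ are the Dirichlet eigenpairs of $-\tfrac12\partial_{xx}$ on $[0,L]$ (so $\mu_n\asymp n^2$ and $\|\phi_n\|_\infty\leq C$), then $p_D(s,x,y)=\sum_n e^{-\mu_n s}\phi_n(x)\phi_n(y)$ while $G_D(s,x,y)=\sum_n E_\beta(-\mu_n s^\beta)\phi_n(x)\phi_n(y)$. Parseval yields
\[\int_0^L (p_D-G_D)^2(s,x,y)\,\d y=\sum_n \phi_n(x)^2\bigl|e^{-\mu_n s}-E_\beta(-\mu_n s^\beta)\bigr|^2.\]
The standard bound $0\leq E_\beta(-z)\leq C/(1+z)$ on $[0,\infty)$ (uniform in $\beta\in(0,1)$), together with the pointwise limit $E_\beta(-\mu_n s^\beta)\to e^{-\mu_n s}$ as $\beta\to 1$, enables a dominated-convergence argument: each summand is dominated by $C\phi_n(x)^2/(1+\mu_n s^\beta)^2$, whose $s$-integral is $O(\mu_n^{-1/\beta})$, summable in $n$. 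Combined with the standard moment bound $\sup_{s\leq t,y}\E|u_s(y)|^p<\infty$ (from the existence theory for \eqref{dirichlet}), this gives $\sup_x B_\beta\to 0$. An analogous eigenfunction argument, using the boundedness of $u_0$, dispatches $A_\beta$.

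The main obstacle is the treatment of $B_\beta$. Each of $p_D$ and $G_D$ is individually singular like $s^{-1/2}$ as $s\downarrow 0$, so the pointwise difference $(p_D-G_D)^2$ has no chance of being integrable in any uniform sense unless one exploits the common spectral structure. Parseval's trick moves the subtraction \emph{inside} the sum, replacing the two singular kernels by the bounded oscillations $e^{-\mu_n s}-E_\beta(-\mu_n s^\beta)$; producing the $\beta$-uniform summable envelope that justifies interchanging the limit with the sum and with the $s$-integral is where the real technical effort lies. The Gronwall closure and the treatment of $A_\beta$ are routine by comparison.
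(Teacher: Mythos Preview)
Your proposal follows the same architecture as the paper---subtract the mild formulations, split the stochastic part into a kernel-difference piece and a $\sigma$-difference piece, control the former by convergence of the kernels and absorb the latter---but with two differences worth noting.

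First, your splitting is the mirror image of the paper's: you pair $(p_D-G_D)$ with $\sigma(u_s)$ and $G_D$ with $\sigma(u_s)-\sigma(u_s^{(\beta)})$, whereas the paper pairs $(p_D-G_D)$ with $\sigma(u_s^{(\beta)})$ and uses $p_D$ for the $\sigma$-difference. Your choice has a genuine advantage: in $B_\beta$ you only need the moment bound $\sup_{s\le t,\,y}\E|u_s(y)|^p<\infty$ for the \emph{classical} solution, which is immediate. The paper's split forces it to control $\E|u_s^{(\beta)}(y)|^p$ \emph{uniformly in $\beta$}, which it handles separately in Remark~\ref{independence}. The price you pay is that your Gronwall kernel $\int_0^L G_D^2(\cdot,x,y)\,\d y$ now depends on $\beta$; you should check (it is routine, via $E_\beta(-z)\le C/(1+z)$ uniformly in $\beta$) that the resulting Gronwall--Henry constant stays bounded as $\beta\to 1$. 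Second, the paper closes with an exponential weight $e^{-\theta t}$ and $\theta$ chosen large, rather than Gronwall; the two devices are interchangeable here.

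There is one slip in your dominated-convergence step for $B_\beta$. The envelope you propose, $C\phi_n(x)^2/(1+\mu_n s^\beta)^2$, itself depends on $\beta$, so it cannot serve as the dominating function for DCT in the limit $\beta\to 1$. The fix is immediate: for $s\in[0,t]$ and $\beta$ bounded away from $0$ one has $s^\beta\ge c_t\,s$ with $c_t=\min(1,t^{\beta_0-1})$, hence $1/(1+\mu_n s^\beta)^2\le 1/(1+c_t\mu_n s)^2$, whose $s$-integral over $[0,t]$ is $O(\mu_n^{-1})$, summable in $n$ and independent of $\beta$. With this $\beta$-free envelope (plus the obvious one $e^{-2\mu_n s}$ for the other half of the square), DCT applies and your argument goes through.
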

The above theorem in conjunction with a continuity estimate give us a weak convergence result. Convergence of moments gives us convergence of finite dimensional distributions while the continuity estimate give tightness. We follow \cite{Bezdek} for some notations and ideas.  Fix $T>0$ and $N>0$ and set $\mathcal{C}=\mathcal{C}([0,\,T]\times [-N,\,N])$, the space of continuous functions on $[0,\,T]\times [-N,\,N]$ with the supremum norm. For any Borel set $A$ in $\mathcal{C}$, we take 
\begin{align*}
\P_\beta(A):=\begin{cases}
 \P(u^{(\beta)}\in A^o)&\quad \text{if} \quad \beta \in(0,\,1)\\
\P(u\in A^o)&\quad \text{if} \quad \beta=1,
\end{cases}
\end{align*}
where $$A^o:=\{f\in \mathcal{C}(\R_+\times \R): f \,\,\text{restricted to} \,\, [0,\,T]\times [-N,\,N]\,\, \text{is in}\,\,A\}.$$
Our final result is the following.
\begin{theorem}\label{Theo4}
As $\beta \rightarrow 1$, the measures $\P_\beta$ converge weakly to $\P_1$.
\end{theorem}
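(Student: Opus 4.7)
The plan is the standard two-step argument that weak convergence on $\mathcal{C}([0,T]\times[-N,N])$ follows from convergence of finite-dimensional distributions together with tightness (Prokhorov's theorem). The first ingredient is essentially immediate from Theorem~\ref{Theo3}: for any finite collection of points $(t_1,x_1),\ldots,(t_k,x_k)$ in $[0,T]\times[-N,N]$, one has $L^p$-convergence of each coordinate $u_{t_i}^{(\beta)}(x_i)\to u_{t_i}(x_i)$ as $\beta\to 1$, and since the vector has fixed finite dimension, this implies joint convergence in probability and hence in distribution. We tacitly extend $u^{(\beta)}$ and $u$ by $0$ to the complement of $[0,L]$ so that the comparison on $[-N,N]$ makes sense.

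The second ingredient, tightness, is where the work is concentrated. Applying the two-parameter Kolmogorov-Chentsov criterion, it suffices to prove an estimate of the form
\begin{equation*}
\sup_{\beta\in[1-\delta,1]}\E\bigl|u^{(\beta)}_t(x)-u^{(\beta)}_s(y)\bigr|^p \leq C\bigl(|t-s|^{p\gamma_1}+|x-y|^{p\gamma_2}\bigr)
\end{equation*}
for some $\delta>0$, some $p\geq 2$ large enough that $p\gamma_i>2$, and a constant $C=C(T,N,p)$ independent of $\beta$. Starting from the mild formulation~\eqref{mild:dirichlet2}, I would split the increment into a deterministic part involving $(\mathcal{G}_Du_0)_t(x)-(\mathcal{G}_Du_0)_s(y)$ and a stochastic integral piece. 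The deterministic part is handled by classical regularity of $G_D$, while the stochastic part is estimated via Burkholder-Davis-Gundy together with the linear growth of $\sigma$, which reduces the problem to controlling the $L^2$-norm of the kernel differences $G_D(t-\cdot,x,\cdot)-G_D(s-\cdot,y,\cdot)$ on $[0,t]\times[0,L]$. Using the subordination representation
\begin{equation*}
G_D(t,x,y)=\int_0^\infty p_D(r,x,y)\,f_{E^\beta_t}(r)\,\d r,
\end{equation*}
where $E^\beta$ is the inverse $\beta$-stable subordinator, these kernel differences reduce to differences of the classical Dirichlet heat kernel averaged against the density of $E^\beta$.

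The main obstacle is keeping the constants and H\"older exponents uniform as $\beta\uparrow 1$, since the density $f_{E^\beta_t}$ degenerates to a point mass at $r=t$ in that limit. Spatial regularity survives uniformly, because the usual H\"older regularity of $p_D$ in the $x$-variable passes through the subordination integral without loss. Temporal regularity is more delicate and requires $\beta$-uniform moment bounds on increments $\E|E^\beta_t-E^\beta_s|^q$; such bounds follow from stable-subordinator estimates on any compact time interval and yield a positive H\"older exponent in $t$ that does not collapse as $\beta\to 1$. Once the uniform continuity estimate is in hand, tightness of $\{\P_\beta\}_{\beta\in[1-\delta,1]}$ on $\mathcal{C}([0,T]\times[-N,N])$ follows from Kolmogorov-Chentsov, and combined with the finite-dimensional convergence derived from Theorem~\ref{Theo3}, this yields $\P_\beta\Rightarrow\P_1$ via Prokhorov's theorem.
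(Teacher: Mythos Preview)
Your overall architecture matches the paper's exactly: finite-dimensional convergence is read off from Theorem~\ref{Theo3}, and tightness comes from a Kolmogorov--Chentsov moment estimate that is uniform in $\beta$. The paper packages the latter as Proposition~\ref{difference} and then concludes just as you do.

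Where you diverge is in the technical route to that uniform H\"older bound. The paper does \emph{not} go through the subordination representation and increments of the inverse subordinator $E^\beta$. Instead it works directly with the eigenfunction expansion~\eqref{eigen} and the two-sided Mittag-Leffler bounds~\eqref{uniformbound}: orthonormality of the $\phi_n$ collapses the $L^2_y$ kernel differences to one-dimensional sums, and the estimates \eqref{bound2}--\eqref{bound3} give $|k|^\eta$ and $|h|^\eta$ control with constants that are visibly independent of $\beta$ (the only $\beta$-dependence enters through $\Gamma(1\pm\beta)$, which is harmless on $(\tfrac12,1]$). This buys explicit, elementary $\beta$-uniformity without any probabilistic input on $E^\beta$.

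Your subordination approach is a legitimate alternative, but the temporal step is not as automatic as you suggest. Writing $G_D(t,x,y)=\E[p_D(E^\beta_t,x,y)]$ and invoking moments of $E^\beta_{t}-E^\beta_{s}$ forces you to pair those moments with the time-regularity of $p_D(r,\cdot,\cdot)$, whose derivative blows up like $r^{-3/2}$ near $r=0$; you then need $\beta$-uniform control on quantities such as $\E\bigl[(E^\beta_t-E^\beta_s)\,(E^\beta_s)^{-3/2}\bigr]$, i.e.\ negative moments of $E^\beta_s$, not just positive moments of the increment. These can be obtained, but they are an extra layer that the paper's Mittag-Leffler route avoids entirely.
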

\section{Background Information}
The aim of this section is to gather some background information needed for the proofs of the main results. The reader can seek more precision in \cite{BLM} and \cite{CMN}. Let $X_t$ denote a Brownian motion and $X_t^D$ denote the Brownian motion killed upon exiting the domain $[0,\,L]$. It is well known that the following expansion holds for the probability density function of $X_t^D$,
\begin{equation*}
p_D(t,\,x,\,y):=\sum_{n=1}^\infty e^{-\lambda_n t}\phi_n(x)\phi_n(y).
\end{equation*}
Here $\lambda_n=\left(\frac{n\pi}{L}\right)^2$ are the eigenvalues of the Dirichlet Laplacian and $\phi_n(x):=\left(\frac{2}{L}\right)^{1/2}\sin\left(\frac{n\pi x}{L}\right)$ are the corresponding eigenfunctions. It is also known that
\begin{align*}
w_t(x)&:=\E^x[u_0(X_t^D)]\\
 &=\int_0^Lp_D(t,\,x,\,y)u_0(y)\,\d y
\end{align*}
solves the heat equation $\partial_t w_t(x)=\frac{1}{2}\partial _{xx}w_t(x)$ defined on $[0,\,L]$ with Dirichlet boundary condition and initial condition $u_0$. Now take $E_t$ to be the inverse of a stable subordinator of index $\beta\in (0,\,1)$. The process $X_{E_t}^D$ is just a time-changed of the killed Brownian motion $X^D_t$ and since $\beta\in (0,\,1)$, $X_{E_t}^D$ moves more slowly than $X_t^D.$  If we set $v_t(x):=\E^x[u_0(X_{E_t}^D)]$ then $v_t(x)$ solves the following equation,
\begin{equation}
\left|\begin{aligned}
&\partial^\beta_t v_t(x)=\frac{1}{2}\partial _{xx}v_t(x)\;\; \text{for}\;\; 0<x<L\;\;\text{and}\; \;t>0\\
&v_t(0)=v_t(L)=0 \quad \text{for}\quad t>0,
\end{aligned}\right.
\end{equation}
with initial condition $u_0$.  The probability density function of $X_{E_t}^D$ is denoted by $f_t(s)$ and satisfies the following relation
\begin{align}\label{relation}
f_t(s)=t\beta^{-1}s^{-1-1/\beta}g_\beta(ts^{-1/\beta}).
\end{align}
We have the following representation.
\begin{align*}
v_t(x)&:=\E^x[u_0(X_{E_t}^D)]\\
&=\int_0^{L}\int_0^\infty \sum_{n=1}^\infty e^{-\lambda_n s}\phi_n(x)\phi_n(y) f_t(s)\,\d su_0(y)\d y\\
&=\int_0^{L}\sum_{n=1}^\infty E_\beta(-\lambda_n t^\beta)\phi_n(x)\phi_n(y)u_0(y)\d y\\
&=\int_0^LG_D(t,\,x,\,y)u_0(y)\,\d y.
\end{align*}
We therefore have the following expansion,
\begin{equation}\label{eigen}
G_D(t,\,x,\,y):= \sum_{n=1}^\infty E_\beta(-\lambda_n t^\beta)\phi_n(x)\phi_n(y)
\end{equation}
which after a change of variable and using \eqref{relation} can also be written as 
\begin{align*}
G_D(t,\,x,\,y)&=\int_0^\infty \sum_{n=1}^\infty e^{-\lambda_n (t/u)^\beta}\phi_n(x)\phi_n(y) g_\beta(u)\d u\\
&=\int_0^\infty p_D(\left(\frac{t}{u}\right)^\beta,\,x,\,y)g_\beta(u)\,\d u.
\end{align*}
The Mittag-Leffler function $E_\beta(\cdot)$ which is the Laplace transform of $f_t(s)$ have the following property,
\begin{equation}\label{uniformbound}
\frac{1}{1 + \Gamma(1-\beta)x}\leq E_{\beta}(-x)\leq \frac{1}{1+\Gamma(1+\beta)^{-1}x} \ \  \ \text{for}\ x>0.
\end{equation}
This was proved by probabilistic means in \cite{Simon}. A simple consequence of the above is that 
\begin{equation*}
\sup_{x,\,y\in [0,\,L]}G_D(t,\,x,\,y)\lesssim \frac{1}{t^{\beta/2}}, 
\end{equation*}
for all $t>0$. Hence for small $t$, we have 
\begin{equation}\label{bound1}
\sup_{x,\,y\in [0,\,L]}G_D(t,\,x,\,y)\lesssim \frac{1}{t^{1/2}}, 
\end{equation}
and for large $t$, the left hand side is bounded. We will also need the following.

\begin{lemma}\label{der}
Fix $t>0$, then 
\begin{align*}
\sup_{x,\,y\in [0,\,L]}\frac{\partial G_D(t,\,x,\,y)}{\partial t}\lesssim \frac{1}{t^{1+\beta/2}}.
\end{align*}
\end{lemma}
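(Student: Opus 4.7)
The plan is to work from the subordination identity already established in the excerpt,
\begin{equation*}
G_D(t,x,y) \;=\; \int_0^\infty p_D\bigl((t/u)^\beta, x, y\bigr)\, g_\beta(u)\,du,
\end{equation*}
and differentiate under the integral sign in $t$. This yields
\begin{equation*}
\frac{\partial G_D(t,x,y)}{\partial t} \;=\; \beta\, t^{\beta-1}\int_0^\infty u^{-\beta}\,\frac{\partial p_D}{\partial s}\bigl((t/u)^\beta, x, y\bigr)\, g_\beta(u)\,du.
\end{equation*}

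The next step is a uniform estimate on $|\partial_s p_D(s,x,y)|$ via the eigenfunction expansion. Using $|\phi_n(x)|\le \sqrt{2/L}$ gives $|\partial_s p_D(s,x,y)| \le \tfrac{2}{L}\sum_{n\geq 1}\lambda_n e^{-\lambda_n s}$; comparing with the Gaussian integral $\int_0^\infty x^2 e^{-cx^2 s}\,dx$ shows the sum is of order $s^{-3/2}$ as $s\to 0^+$, and the spectral gap forces exponential decay for large $s$. Since exponential decay dominates polynomial decay, we obtain the single bound $|\partial_s p_D(s,x,y)| \le C s^{-3/2}$ uniformly in $s>0$ and in $x,y\in [0,L]$.

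Substituting this bound into the displayed integral and a change of variables gives
\begin{equation*}
\left|\frac{\partial G_D(t,x,y)}{\partial t}\right| \;\le\; C\beta\, t^{\beta-1}\int_0^\infty \!\left(\frac{t}{u}\right)^{-3\beta/2}\! u^{-\beta}\, g_\beta(u)\,du \;=\; C\beta\, t^{-1-\beta/2}\!\int_0^\infty u^{\beta/2} g_\beta(u)\,du.
\end{equation*}
The remaining integral is the fractional moment $\E[S_1^{\beta/2}]$ of the stable subordinator evaluated at time one, which is finite because $\beta/2<\beta$ (recall that the $\beta$-stable subordinator has heavy tail $u^{-1-\beta}$ and super-polynomial decay at the origin). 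This gives the claimed $t^{-1-\beta/2}$ estimate.

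The only real technical point is justifying the differentiation under the integral, which follows from the same $s^{-3/2}$ estimate combined with dominated convergence. A possible alternative route, if one preferred to avoid the heat-kernel derivative altogether, would be to differentiate the eigenfunction series \eqref{eigen} directly using $\tfrac{d}{dt}E_\beta(-\lambda_n t^\beta) = -\lambda_n t^{\beta-1}E_{\beta,\beta}(-\lambda_n t^\beta)$ together with the analogue $E_{\beta,\beta}(-x)\lesssim (1+x)^{-2}$ of the bound \eqref{uniformbound}; however, the subordination approach above is cleaner since it directly recycles the eigenfunction analysis already needed for the on-diagonal estimate \eqref{bound1}.
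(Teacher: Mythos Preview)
Your argument is correct and is essentially the same as the paper's: both differentiate the subordination representation $G_D(t,x,y)=\int_0^\infty p_D((t/u)^\beta,x,y)\,g_\beta(u)\,du$, bound the resulting factor $\sum_n \lambda_n e^{-\lambda_n s}\lesssim s^{-3/2}$, and reduce to the finite moment $\int_0^\infty u^{\beta/2}g_\beta(u)\,du=\E[S_1^{\beta/2}]$. The only difference is cosmetic---the paper keeps the eigenfunction sum explicit rather than packaging it as $\partial_s p_D$---and your write-up actually fills in the final step (finiteness of the $\beta/2$-moment) that the paper leaves implicit.
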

\begin{proof}
We begin with the following representation
\begin{align*}
G_D(t,\,x,\,y)&=\sum_{n=1}^\infty\int_0^\infty e^{-\lambda_n s}\phi_n(x)\phi_n(y) f_t(s)\,\d s\\
&=\sum_{n=1}^\infty\int_0^\infty e^{-\lambda_n (\frac{t}{u})^\beta}\phi_n(x)\phi_n(y) g_\beta(u)\,\d u.
\end{align*}
This gives us
\begin{align*}
\frac{\partial G_D(t,\,x,\,y)}{\partial t}=\frac{1}{t}\sum_{n=1}^\infty\int_0^\infty -\lambda_n (\frac{t}{u})^\beta e^{-\lambda_n(\frac{t}{u})^\beta}\phi_n(x)\phi_n(y) g_\beta(u)\,\d u
\end{align*}
We thus have 
\begin{align*}
\left|\frac{\partial G_D(t,\,x,\,y)}{\partial t} \right|&\lesssim \frac{1}{t}\sum_{n=1}^\infty\int_0^\infty -\lambda_n (\frac{t}{u})^\beta e^{-\lambda_n(\frac{t}{u})^\beta}g_\beta(u)\,\d u\\
&\lesssim \frac{1}{t^{1+\beta/2}}.
\end{align*}
\end{proof}
To prove the following result will use the fact that
\begin{equation*}
\frac{\partial E_\beta(-\lambda_n t^\beta)}{\partial t}\lesssim \frac{\lambda_n t^{\beta-1}}{1+\lambda_n t^\beta}.
\end{equation*}
This can be found in \cite{kra}.
\begin{lemma}\
Fix $t>0$ and let $0<\eta<1$, then 
\begin{align}\label{bound2}
\sup_{x\in [0,\,L]}\int_0^t\int_0^L [&G_D(s,\,x+k,\,y)-G_D(s,\,x,\,y)]^2\,\d y\,\d s\leq C_1|k|^{\eta},
\end{align}
and if we further assume that $\eta<1-\beta/2$, then 
\begin{align}\label{bound3}
\sup_{x\in [0,\,L]}\int_0^t\int_0^L [G_D&(s+h,\,x,\,y)-G_D(s,\,x,\,y)]^2\,\d s\,\d y\leq C_2 |h|^{\eta},
\end{align}
where $C_1$ and $C_2$ are positive constants which can be taken to be independent of $\beta$.
\end{lemma}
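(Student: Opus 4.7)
The plan is to apply Parseval's identity in the orthonormal eigenbasis $\{\phi_n\}$ on $[0,L]$ and then interpolate between pointwise bounds on the Mittag-Leffler function and its derivative. Explicitly, orthonormality gives
\begin{align*}
\int_0^L [G_D(s,x+k,y)-G_D(s,x,y)]^2\,dy &= \sum_{n\geq 1} E_\beta(-\lambda_n s^\beta)^2\,[\phi_n(x+k)-\phi_n(x)]^2,\\
\int_0^L [G_D(s+h,x,y)-G_D(s,x,y)]^2\,dy &= \sum_{n\geq 1}\phi_n(x)^2\,[E_\beta(-\lambda_n(s+h)^\beta)-E_\beta(-\lambda_n s^\beta)]^2,
\end{align*}
so each left-hand side of \eqref{bound2}--\eqref{bound3} becomes a sum of $s$-integrals after integrating in $s \in [0,t]$.

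For \eqref{bound2}, I would combine the elementary bound $|\phi_n(x+k)-\phi_n(x)|^2 \leq C\min((n|k|)^2,1)$ with $E_\beta(-\lambda_n s^\beta)\leq C/(1+\lambda_n s^\beta)$ from \eqref{uniformbound}. The change of variable $u=\lambda_n s^\beta$ shows that $\int_0^t (1+\lambda_n s^\beta)^{-2}\,ds\leq C/\lambda_n^{1/\beta}$ (plus a lower-order $t$-dependent term when $\beta\leq 1/2$). Splitting the sum in $n$ at the threshold $n\sim 1/|k|$---using $(n|k|)^2$ for low frequencies and $1$ for high frequencies---yields the desired bound $C|k|^\eta$ for any $\eta\in(0,1)$.

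For \eqref{bound3}, I would use two pointwise estimates on $|E_\beta(-\lambda_n(s+h)^\beta)-E_\beta(-\lambda_n s^\beta)|$: first, the trivial $\leq C/(1+\lambda_n s^\beta)$ from monotonicity of $E_\beta(-\cdot)$ and \eqref{uniformbound}; second, $\leq C\lambda_n h^\beta/(\beta(1+\lambda_n s^\beta))$ obtained by integrating the derivative bound $|\partial_r E_\beta(-\lambda_n r^\beta)|\lesssim \lambda_n r^{\beta-1}/(1+\lambda_n r^\beta)$ stated just before the lemma, followed by subadditivity $(s+h)^\beta-s^\beta \leq h^\beta$ valid for $\beta\in(0,1)$. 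The geometric mean with weight $\theta\in[0,1]$ gives
\[
|E_\beta(-\lambda_n(s+h)^\beta)-E_\beta(-\lambda_n s^\beta)|^2 \leq \frac{C(\lambda_n h^\beta)^{2\theta}}{(1+\lambda_n s^\beta)^2}.
\]
Inserting into Parseval, integrating in $s$, and summing in $n$ produces $Ch^{2\beta\theta}\sum_n \lambda_n^{2\theta-1/\beta}$, a series that converges precisely when $\theta<1/(2\beta)-1/4$; setting $\eta=2\beta\theta$ reproduces exactly the allowed range $\eta<1-\beta/2$.

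The main obstacle is the claim of $\beta$-uniformity of $C_1,C_2$. The constants appearing in \eqref{uniformbound}, the derivative estimate, and the Gamma factors are continuous functions of $\beta$, so uniform constants can be extracted on any closed subinterval of $(0,1]$---which suffices for the application to Theorems \ref{Theo3} and \ref{Theo4} where $\beta\to 1$. A cruder temporal estimate using only Lemma \ref{der} would deliver only $\eta<1-\beta$; the sharp exponent $1-\beta/2$ arises from the two-term interpolation above, and the summability threshold $4\theta-2/\beta<-1$ coming from the eigenfunctions indicates that this exponent cannot be pushed further by this spectral method.
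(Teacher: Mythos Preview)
Your approach is the paper's: expand $G_D$ in the eigenbasis, apply Parseval in $y$, and interpolate. For \eqref{bound2} the paper does the interpolation on the eigenfunction side, writing $|\phi_n(x+k)-\phi_n(x)|\lesssim (n|k|)^\eta$ directly and then summing to get $\int_0^t s^{-\beta(1+\eta)/2}\,ds$; your frequency splitting at $n\sim 1/|k|$ is an equivalent way to carry out the same estimate. For \eqref{bound3} the paper's interpolation is slightly different from yours: it bounds one factor of the squared Mittag--Leffler increment by $E_\beta(-\lambda_n s^\beta)\le (1+\Gamma(1+\beta)^{-1}\lambda_n s^\beta)^{-1}$ via monotonicity, and the other factor by $\bigl(\int_s^{s+h} r^{-1}\,dr\bigr)^\eta\le (h/s)^\eta$, so that after the $n$-sum one lands on $h^\eta\int_0^t s^{-\eta-\beta/2}\,ds$ with no case analysis in $\beta$.

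One caveat on your route for \eqref{bound3}: the step ``integrating in $s$, and summing in $n$ produces $Ch^{2\beta\theta}\sum_n \lambda_n^{2\theta-1/\beta}$'' uses $\int_0^t(1+\lambda_n s^\beta)^{-2}\,ds\le C\lambda_n^{-1/\beta}$, but this fails when $\beta\le 1/2$ since $\int_0^\infty u^{1/\beta-1}(1+u)^{-2}\,du$ diverges; the extra $t$-dependent piece you flagged for \eqref{bound2} is here of size $t^{1-2\beta}\lambda_n^{-2}$, and combined with $\lambda_n^{2\theta}$ it forces only $\theta<3/4$, i.e.\ $\eta<3\beta/2$, strictly smaller than $1-\beta/2$. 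The fix is simply to reverse the order: sum in $n$ first to get $\sum_n\lambda_n^{2\theta}(1+\lambda_n s^\beta)^{-2}\lesssim s^{-\beta(4\theta+1)/2}$, and then the $s$-integral converges exactly when $\theta<1/(2\beta)-1/4$, recovering the stated range for all $\beta\in(0,1)$. Since the applications only require $\beta\to 1$, this is a minor point, and your argument as written is already complete in that regime.
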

\begin{proof}
The proof relies on the expansion of the Dirichlet heat kernel and the mean value theorem.  For the first part, we have
\begin{align*}
\int_0^t\int_0^L [&G_D(s,\,x+k,\,y)-G_D(s,\,x,\,y)]^2\,\d y\,\d s\\
&=\int_0^t \sum_{n=1}^\infty E_\beta(-\lambda_n s^\beta)^2[\phi_n(x+k)-\phi_n(x)]^2\,\d s\\
&\lesssim \int_0^t \sum_{n=1}^\infty 
E_\beta(-\lambda_n s^\beta)^2|\phi_n(x+k)-\phi_n(x)| \,\d s\\
&\lesssim  k^\eta\int_0^t \sum_{n=1}^\infty 
E_\beta(-\lambda_n s^\beta)^2 n^\eta \,\d s\\
&\lesssim k^\eta \int_0^t\frac{1}{s^{\beta(1+\eta)/2}}\,\d s.
\end{align*}
We note that the integral on the right hand side of the above display can be bounded by a quantity independent of $\beta$.  The second part follows from the following.
\begin{align*}
\int_0^t\int_0^L |G_D&(s+h,\,x,\,y)-G_D(s,\,x,\,y)|^2\,\d s\,\d h \\
&=\int_0^t\sum_{n=1}^\infty[E_\beta(-\lambda_n(s+h)^\beta)-E_\beta(-\lambda_ns^\beta)]^2|\phi_n(x)|^2\,\d s\\
&\lesssim \int_0^t\sum_{n=1}^\infty \frac{1}{1+\Gamma(1+\beta)^{-1}\lambda_ns^\beta}|\phi_n(x)|^2\left[\int_s^{s+h}\frac{1}{l}\,\,\d l\right]^{\eta}\d s\\
&\lesssim h^\eta\int_0^t\sum_{n=1}^\infty \frac{1}{1+\Gamma(1+\beta)^{-1}\lambda_ns^\beta}\frac{1}{s^\eta}\,\d s\\
&\lesssim h^\eta \int_0^t \frac{1}{s^{\beta/2}}\frac{1}{s^\eta}\,\d s.
\end{align*}
We take $\eta<1-\beta/2$ so that the integral above is defined. Morever it is bounded by a quantity which is independent of $\beta$.
\end{proof}
\begin{remark}
The above estimate can be sharpened a little bit. But this is sufficient for our needs.
\end{remark}
\begin{lemma}\label{con-kernel}
Fix $t>0$, then 
\begin{equation*}
\lim_{\beta \rightarrow 1}\sup_{x\in [0,\,L]}(\mathcal{G}_D u_0)_t(x)=(\mathcal{P}_D u_0)_t(x).
\end{equation*}
\end{lemma}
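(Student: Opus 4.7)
The plan is to expand both kernels in the common Dirichlet eigenbasis $\{\phi_n\}$ and reduce the statement to a term-by-term comparison. Setting $\hat u_0(n) := \int_0^L \phi_n(y) u_0(y)\,\d y$ and using \eqref{eigen} together with the analogous spectral expansion of $p_D$, we have
\begin{equation*}
(\mathcal{G}_D u_0)_t(x) - (\mathcal{P}_D u_0)_t(x) = \sum_{n=1}^\infty \bigl[E_\beta(-\lambda_n t^\beta) - e^{-\lambda_n t}\bigr] \phi_n(x) \hat u_0(n).
\end{equation*}
For each fixed $n$, the Mittag-Leffler function $\beta \mapsto E_\beta(-\lambda_n t^\beta)$ is continuous at $\beta = 1$ with $E_1(-x) = e^{-x}$, so each summand tends to zero pointwise as $\beta \to 1$.

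The remaining task is to justify interchanging the limit and the sum, uniformly in $x \in [0,L]$. I would exploit the upper bound in \eqref{uniformbound}. For $\beta$ in a small neighborhood of $1$, both $\Gamma(1+\beta)^{-1}$ and $t^\beta$ are bounded below by positive constants depending only on $t$. Combined with $\lambda_n = (n\pi/L)^2$, this yields
\begin{equation*}
E_\beta(-\lambda_n t^\beta) \leq \frac{1}{1 + C n^2}
\end{equation*}
for some $C = C(t) > 0$, uniformly over $n \geq 1$ and over $\beta$ close to $1$. Since $\|\phi_n\|_\infty \leq \sqrt{2/L}$ and $u_0$ is bounded, we also have $|\phi_n(x) \hat u_0(n)| \leq 2 \|u_0\|_\infty$ uniformly in $n$ and $x$, so each term in the series is dominated by a summable sequence independent of both $x$ and $\beta$.

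The conclusion then follows from a standard split-at-$N$ argument: the tail $\sum_{n > N}$ is controlled uniformly in $x$ and $\beta$ by the summable majorant above and can be made arbitrarily small by choosing $N$ large, while for each fixed $N$ the finite sum $\sum_{n \leq N}$ is a linear combination of the bounded continuous functions $\phi_n(x)$ with coefficients $E_\beta(-\lambda_n t^\beta) - e^{-\lambda_n t}$ that tend to zero as $\beta \to 1$, hence converges to zero uniformly on $[0,L]$. The main delicate point is keeping the domination uniform in $\beta$; this is handled cleanly by the explicit quantitative bound \eqref{uniformbound}, which, once restricted to $\beta$ in a small neighborhood of $1$, produces a constant $C$ independent of $\beta$.
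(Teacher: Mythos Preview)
Your proof is correct and follows essentially the same approach as the paper: expand both kernels in the Dirichlet eigenbasis, use the bound \eqref{uniformbound} to obtain a summable majorant independent of $\beta$, and pass to the limit term by term via dominated convergence. The only cosmetic differences are that the paper works at the level of the kernel difference $G_D-p_D$ before integrating against $u_0$, and it establishes the pointwise limit $E_\beta(-\lambda_n t^\beta)\to e^{-\lambda_n t}$ via the Laplace transform rather than citing continuity of $E_\beta$ in $\beta$ directly.
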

\begin{proof}
Given the upper bound on $G_D(t,\,x,\,y)$ and the fact that the initial condition is bounded above, it will be enough to show that that 
\begin{equation*}
\lim_{\beta\rightarrow 1}G_D(t,\,x,\,y)=p_D(t,\,x,\,y). 
\end{equation*}
This is straightforward. We use the Laplace transform of the Mittag-Leffler function to see that
\begin{equation*}
\int_0^\infty e^{-\theta t}E_\beta(-\lambda_n t^\beta)= \frac{\theta^{\beta-1}}{\theta^\beta-\lambda_n},
\end{equation*}
for any $\theta>0$.  We then  take limit as $\beta\rightarrow 1$ to conclude that 
\begin{equation*}
\lim_{\beta\rightarrow 1} E_\beta(-\lambda_n t^\beta)=e^{-\lambda_n t}.
\end{equation*}
Using the expansions for the heat kernel, we now have 
\begin{align*}
|G_D(t,\,x,\,y)-p_D(t,\,x,\,y)|&\leq \sum_{n=1}^\infty|E_\beta(-\lambda_nt^\beta)-e^{-\lambda_n t}|.
\end{align*}
We note that using the bounds on the Mittag-Leffler function, each term appearing in the summation can be bounded by a quantity independent of $\beta$ and summable.  We now take limit as $\beta\rightarrow 1$ on both sides to obtain the required result.
\end{proof}

\section{Proof of Theorems \ref{Theo1} and \ref{Theo2}}
For $\theta>0$, set
\begin{equation}\label{L}
\Lambda(\theta):=\int_0^\infty e^{-\theta t}E_\beta(-\lambda_1t^\beta)^2\,\d t.
\end{equation}
The important point here is to notice that one can use the bounds given by \eqref{uniformbound} to see that $\Lambda(\theta)$ tends to infinity as $\theta$ goes to zero if and only if  $2\beta\leq 1$.
\begin{proof}[Proof of Theorem \ref{Theo1}]

We prove the second statement of the theorem first.  We start with the mild formulation given by \eqref{mild:dirichlet2} which upon setting 
\begin{equation*}
\langle u_t, \phi_1\rangle:=\int_0^L u_t(x) \phi_1(x)\,\d x,
\end{equation*}
yields 
\begin{align*}
\langle u_t, \phi_1\rangle=E_\beta(-\lambda_1 t^\beta) \langle u_0, \phi_1\rangle +\lambda \int_0^t\int_0^L E_\beta(\lambda_1(t-s)^\beta)\phi_1(y)\sigma(u_s(y))W(\d s,\,\d y).
\end{align*}
We now take the second expectation and use the Ito-Walsh isometry to obtain
\begin{align}
\E\langle u_t, \phi_1\rangle^2&=E_\beta(-\lambda_1 t^\beta)^2 \langle u_0, \phi_1\rangle^2 +\lambda^2\int_0^t\int_0^L E_\beta(-\lambda_1(t-s)^\beta)^2\phi_1^2(y)\E|\sigma(u_s(y))|^2\d s\d y.\label{iso}
\end{align}
After some computations and using \eqref{L} together with the assumption $\sigma$ give us
\begin{align}\label{lower-renewal}
\int_0^\infty e^{-\theta t}\E\langle u_t, \phi_1\rangle^2\,\d t&=  \Lambda(\theta) \langle u_0, \phi_1\rangle^2+\lambda^2\Lambda(\theta)\int_0^\infty e^{-\theta t}\int_0^L\E|\sigma(u_s(y))|^2\phi_1^2(y)\d y\d t\nonumber \\
&\gtrsim  \Lambda(\theta) \langle u_0, \phi_1\rangle^2+\lambda^2l_\sigma^2\Lambda(\theta)\int_0^\infty e^{-\theta t}\E\langle u_t, \phi_1\rangle^2\,\d t\\
&\gtrsim  \Lambda(\theta) \langle u_0, \phi_1\rangle^2+2\int_0^\infty e^{-\theta t}\E\langle u_t, \phi_1\rangle^2\,\d t,\nonumber
\end{align}
where we have taken $\theta$ small enough to obtain the last inequality and used the fact that $\beta\in(0,\,\frac{1}{2}]$.  This means that for small enough $\theta$, 
\begin{equation*}
\int_0^\infty e^{-\theta t}\E\langle u_t, \phi_1\rangle^2\,\d t=\infty
\end{equation*}
which in turns means that for $t$ large enough, $\E\langle u_t, \phi_1\rangle^2$ grows exponentially. Now using the following, 
\begin{align*}
\E\langle u_t, \phi_1\rangle^2&\lesssim  \sup_{x\in [0,\,L]}\E|u_t(x)|^2,
\end{align*}
we can complete the proof of the second part of the theorem. The first part of the theorem merely follows from the fact that the second term of \eqref{iso} is positive and that the first term cannot have exponential decay.
\end{proof}

\begin{proof}[Proof of  Theorem \ref{Theo2}]
We use the same notation as in the proof of Theorem \ref{Theo1}. In fact \eqref{lower-renewal} also holds,
\begin{align*}
\int_0^\infty e^{-\theta t}\E\langle u_t, \phi_1\rangle^2\,\d t&\gtrsim  \Lambda(\theta) \langle u_0, \phi_1\rangle^2+\lambda^2l_\sigma^2\Lambda(\theta)\int_0^\infty e^{-\theta t}\E\langle u_t.\phi_1\rangle^2\,\d t
\end{align*}
Here the main observation is that since $2\beta>1$, the function $\Lambda(\theta)$ is bounded. So that for any fixed $\theta>0$ there exists $\lambda_u$ large enough so that for $\lambda\geq \lambda_u$, the above inequality yields
\begin{align*}
\int_0^\infty e^{-\theta t}\E\langle u_t, \phi_1\rangle^2\,\d t&\gtrsim  \Lambda(\theta) \langle u_0, \phi_1\rangle^2+2\int_0^\infty e^{-\theta t}\E\langle u_t, \phi_1\rangle^2\,\d t.
\end{align*}
We now use similar arguments as in the proof of the previous theorem to finish the first part of the current theorem. For the second part, we look at the mild formulation given by \eqref{mild:dirichlet2} which yields
\begin{align*}
\E|u_t(x)|^2&=
|(\mathcal{P}_Du)_t(x)|^2+ \lambda^2 \int_0^L\int_0^t G_D(t-s,\,x,\,y)^2\E|\sigma(u_s(y))|^2\d s\,\d y\\
&\lesssim |(\mathcal{P}_Du)_t(x)|^2+ \lambda^2 L_\sigma^2\int_0^L\int_0^t G_D(t-s,\,x,\,y)^2\E|u_s(y)|^2\d s\,\d y\\
&:=I_1+I_2.
\end{align*}
Since we are assuming that the initial function is bounded above by a constant, the first term $I_1$ is also bounded above by a constant. We now bound $I_2$ as follows. Using the assumption on $\sigma$, we have 
\begin{align*}
I_2&\leq \lambda^2 L_\sigma^2\sup_{0<t<\infty}\sup_{x\in [0,\,L]} \E|u_t(x)|^2\int_0^L\int_0^t G_D(t-s,\,x,\,y)^2\,\d s\,\d y.
\end{align*}
We now use the fact that
\begin{align*}
\int_0^L\int_0^t G_D(t-s,\,x,\,y)^2\,\d s\,\d y\lesssim \int_0^t\sum_{n=0}^\infty E_\beta(-\lambda_n(t-s)^\beta)^2\,\d s.
\end{align*}
Since $\beta\in (\frac{1}{2}, 1)$,  the right hand side of the above display is bounded by a constant and we can choose $\lambda_l$ small enough so that for all $\lambda\leq \lambda_l$, the above estimates yield 
\begin{align*}
\sup_{0<t<\infty}\sup_{x\in [0,\,L]} \E|u_t(x)|^2\lesssim 1+\frac{1}{2}\sup_{0<t<\infty}\sup_{x\in [0,\,L]} \E|u_t(x)|^2.
\end{align*}
This completes the proof of the theorem.
\end{proof}
\section{Proof of Theorems \ref{Theo3} and \ref{Theo4}}
By a slight abuse of notation, we are now going to explicitly indicate the dependence of the solution on $\beta$, we therefore call $u_t^{(\beta)}$, the unique solution to \eqref{mild:dirichlet2} and $u_t$, the solution to \eqref{mild:dirichlet}.

\begin{proof}[Proof of Theorem \ref{Theo3}]
From the mild formulation of the solutions, we have
\begin{align*}
u_t(x)-u_t^{(\beta)}(x)&=(\mathcal{P}_Du)_t(x)-(\mathcal{G}_Du)_t(x)+\lambda \int_0^L\int_0^tp_D(t-s,\,x,\,y)\sigma(u_s(y))W(\d s\,\d y)\\
&-\lambda \int_0^L\int_0^tG_D(t-s,\,x,\,y)\sigma(u_s^{(\beta)}(y))W(\d s\,\d y).
\end{align*}
We look at the stochastic terms first and rewrite them as follows
\begin{align*}
\lambda\int_0^L\int_0^t&p_D(t-s,\,x,\,y)\sigma(u_s(y))W(\d s\,\d y)-\lambda\int_0^L\int_0^tG_D(t-s,\,x,\,y)\sigma(u_s^{(\beta)}(y))W(\d s\,\d y)\\
&=\lambda\int_0^L\int_0^t[p_D(t-s,\,x,\,y)-G_D(t-s,\,x,\,y)]\sigma(u_s^{(\beta)}(y))W(\d s\,\d y)\\
&+\lambda\int_0^L\int_0^tp_D(t-s,\,x,\,y)[\sigma(u_s(y))-\sigma(u^{(\beta)}_s(y))]W(\d s\,\d y).
\end{align*}
We now use the above and Burkholder's inequality to see that 
\begin{align*}
\E|u_t(x)-u_t^{(\beta)}(x)|^p&\lesssim |(\mathcal{P}_Du)_t(x)-(\mathcal{G}_Du)_t(x)|^p\\
&+\left[\int_0^t\int_0^L|p_D(t-s,\,x,\,y)-G_D(t-s,\,x,\,y)|^2[\E|\sigma(u_s^{(\beta)}(y))|^p]^{2/p}\,\d y\,\d s\right]^{p/2}\\
&+\left[\int_0^L\int_0^tp_D(t-s,\,x,\,y)^2[\E[\sigma(u_s(y))-\sigma(u^{(\beta)}_s(y))]^p]^{2/p}\d s\,\d y \right]^{p/2}.
\end{align*}
Fix $\theta>0$ so that the above give us
\begin{align*}
e^{-\theta t} \E|u_t(x)-u_t^{(\beta)}(x)|^p&\lesssim e^{-\theta t} |(\mathcal{P}_Du)_t(x)-(\mathcal{G}_Du)_t(x)|^p\\
&+e^{-\theta t}\left[\int_0^t\int_0^L|p_D(t-s,\,x,\,y)-G_D(t-s,\,x,\,y)|^2[\E|\sigma(u_s^{(\beta)}(y))|^p]^{2/p}\,\d y\,\d s\right]^{p/2}\\
&+e^{-\theta t}\left[\int_0^L\int_0^tp_D(t-s,\,x,\,y)^2[\E[\sigma(u_s(y))-\sigma(u^{(\beta)}_s(y))]^p]^{2/p}\d s\,\d y \right]^{p/2}\\
&:=I_1+I_2+I_3.
\end{align*}
We look at the third term first. Using the fact $\sigma$ is globally Lipschitz, we obtain 
\begin{align*}
I_3&:=e^{-\theta t}\left[\int_0^L\int_0^tp_D(t-s,\,x,\,y)^2[\E[\sigma(u_s(y))-\sigma(u^{(\beta)}_s(y))]^p]^{2/p}\d s\,\d y \right]^{p/2}\\
&\leq \sup_{x\in [0,\,L]}e^{-\theta t}\E|u_t(x)-u_t^{(\beta)}(x)|^p\left[\int _0^\infty\int_0^Le^{-2\theta s/p}p_D(s,\,x,\,y)^2\d y\,\d s\right]^{p/2}.
\end{align*}
We can bound the integral on the right hand side of the above display as follows,
\begin{align*}
\int _0^\infty\int_0^Le^{-2\theta s/p}p_D(s,\,x,\,y)^2\d y\,\d s\lesssim \int_0^\infty e^{-2\theta s/p}\frac{1}{s^{1/2}}\,\d s.
\end{align*}
We now fix $\theta>0$ large enough so that 
$$I_3\lesssim \frac{1}{2} \sup_{x\in [0,\,L]}e^{-\theta t}\E|u_t(x)-u_t^{(\beta)}(x)|^p.$$
We now look at $I_2$.  Using Remark \ref{independence} below and the assumption on $\sigma$, we obtain
\begin{align*}
I_2&=e^{-\theta t}\left[\int_0^t\int_0^L|p_D(t-s,\,x,\,y)-G_D(t-s,\,x,\,y)|^2[\E|\sigma(u_s^{(\beta)}(y))|^p]^{2/p}\,\d y\,\d s\right]^{p/2}\\
&\lesssim \sup_{x\in [0,\,L]}e^{-\theta t}\E|u_s^{(\beta)}(x)|^p\left[\int_0^\infty\int_0^L e^{-2\theta s/p}[p_D(s,\,x,\,y)-G_D(s,\,x,\,y)]^2\,\d s\,\d y\right]^{p/2}\\
&\lesssim \left[\int_0^\infty\int_0^L e^{-2\theta s/p}[p_D(s,\,x,\,y)-G_D(s,\,x,\,y)]^2\,\d s\,\d y\right]^{p/2}.
\end{align*}
Combining the estimates above, we obtain 
\begin{align*}
\sup_{x\in[0,\,L]}e^{-\theta t} \E|u_t(x)-u_t^{(\beta)}(x)|^p&\lesssim  e^{-\theta t} |(\mathcal{P}_Du)_t(x)-(\mathcal{G}_Du)_t(x)|^p\\
&+ \left[\int_0^\infty\int_0^L e^{-2\theta s/p}[p_D(s,\,x,\,y)-G_D(s,\,x,\,y)]^2\,\d s\,\d y\right]^{p/2}.
\end{align*}
We now take $\beta\rightarrow 1$ and use Lemma \ref{con-kernel}  as well as its proof to conclude. 
\end{proof} 
\begin{remark}\label{independence}
We remark that for some $\theta$, we can use the bounds on $G_D(t,\,x,\,y)$ to show that 
\begin{equation*}
\sup_{t>0,\,x\in [0,\,L]}e^{-\theta t}\E|u_s^{(\beta)}(x)|^p
\end{equation*}
is bounded above by a constant independent of $\beta$.  Using the following
\begin{align*}
\int_0^\infty \int_0^L e^{-\theta t}G_D(t,\,x,\,y)^2\,\d y\,\d s\lesssim \int_0^\infty e^{-\theta t}G_D(2t,\,x,\,x)\,\d s
\end{align*}
and the bounds on $G_D(t,\,x,\,y)$, we can show that the left hand side is bounded by a quantity independent of $\beta$. The mild formulation together with some computations similar to those used in the above proof, we have
\begin{align*}
\sup_{t>0,\,x\in [0,\,L]}e^{-\theta t}\E|u^{(\beta)}_t(x)|^p\lesssim 1+\sup_{t>0,\,x\in [0,\,L]}e^{-\theta t}\E|u^{(\beta)}_t(x)|^p\left[\int_0^\infty e^{-2\theta t/p}G_D(t,\,x,\,y)^2\,\d y\right]^{p/2}.
\end{align*}
We thus have the stated claim.
\end{remark}

\begin{proposition}\label{difference}
There exists a constant $K$ independent of $\beta$ such that for any $0<\eta<1$,
\begin{align*}
\E|u^{(\beta)}_t(y)-u^{(\beta)}_s(x)|^p\leq K[|y-x|^a+|t-s|^b],
\end{align*} 
for some constant $a,\,b\in (0,\,1).$
\end{proposition}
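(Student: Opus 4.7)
The plan is to prove this standard Kolmogorov-type continuity estimate by decomposing the difference via the mild formulation \eqref{mild:dirichlet2} and handling the spatial and temporal increments separately, exploiting the $\beta$-independent bounds already established in \eqref{bound2}, \eqref{bound3}, Lemma \ref{der}, and Remark \ref{independence}. First, by the triangle inequality, it suffices to bound $\E|u^{(\beta)}_t(y)-u^{(\beta)}_t(x)|^p$ and $\E|u^{(\beta)}_t(x)-u^{(\beta)}_s(x)|^p$ separately; the former gives the $|y-x|^a$ term and the latter gives the $|t-s|^b$ term.

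For the spatial increment at fixed $t$, the mild formulation yields
\begin{align*}
u^{(\beta)}_t(y)-u^{(\beta)}_t(x) = [(\mathcal{G}_Du_0)_t(y)-(\mathcal{G}_Du_0)_t(x)] + \lambda\int_0^t\int_0^L[G_D(t-r,y,z)-G_D(t-r,x,z)]\sigma(u^{(\beta)}_r(z))W(\d r,\d z).
\end{align*}
The deterministic term is handled using the eigenfunction expansion, since $|\phi_n(y)-\phi_n(x)|\lesssim n^\eta|y-x|^\eta$ and $u_0$ is bounded, and the bound is $\beta$-independent because the Mittag-Leffler factors $E_\beta(-\lambda_n t^\beta)$ can be uniformly bounded via \eqref{uniformbound}. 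For the stochastic term, I apply Burkholder's inequality, use $|\sigma(u)|\leq L_\sigma|u|$, pull out $\sup_{r,z}\E|u^{(\beta)}_r(z)|^p$ using Remark \ref{independence}, and invoke \eqref{bound2} to get the $|y-x|^{\eta p/2}$ bound with $\beta$-independent constant.

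For the temporal increment at fixed $x$, I split
\begin{align*}
u^{(\beta)}_t(x)-u^{(\beta)}_s(x) &= [(\mathcal{G}_Du_0)_t(x)-(\mathcal{G}_Du_0)_s(x)] + \lambda\int_s^t\int_0^L G_D(t-r,x,z)\sigma(u^{(\beta)}_r(z))W(\d r,\d z) \\
&\quad + \lambda\int_0^s\int_0^L [G_D(t-r,x,z)-G_D(s-r,x,z)]\sigma(u^{(\beta)}_r(z))W(\d r,\d z).
\end{align*}
The deterministic difference is controlled by integrating Lemma \ref{der} in time, giving a bound of order $|t-s|^{1-\beta/2}$, which for $\beta$ bounded away from $2$ is Hölder of positive order. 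Burkholder applied to the middle term, together with the short-time estimate \eqref{bound1} and the $\beta$-independent moment bound of Remark \ref{independence}, gives a bound of order $|t-s|^{p/4}$. The last stochastic integral is handled by \eqref{bound3}, which supplies a $\beta$-independent $|t-s|^{\eta p/2}$ bound provided $\eta<1-\beta/2$; choosing $\eta$ small (say $\eta<1/2$) ensures this holds uniformly in $\beta\in(0,1)$.

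The mild obstacle is ensuring that every constant appearing is genuinely $\beta$-independent; this is precisely why Lemma 2.2 was stated with $\beta$-independent constants $C_1,C_2$ and why Remark \ref{independence} was recorded. Putting these pieces together and taking $a=\eta p/2$ and $b=\min(\eta p/2,(1-\beta/2)p,p/4)$ for a fixed small $\eta$, or more cleanly $a,b\in(0,1)$ after absorbing the power $p$ into the constant $K$, completes the proof.
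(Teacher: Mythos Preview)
Your proposal is correct and follows essentially the same route as the paper: split into spatial and temporal increments via the mild formulation, apply Burkholder's inequality to the stochastic integrals, and feed in the $\beta$-independent kernel bounds \eqref{bound2}, \eqref{bound3} together with the uniform moment bound of Remark~\ref{independence}. The only cosmetic differences are that you invoke Lemma~\ref{der} explicitly for the deterministic temporal increment and \eqref{bound1} for the ``fresh'' stochastic piece $\int_s^t\int_0^L G_D(t-r,x,z)\sigma(u_r^{(\beta)}(z))\,W(\d r,\d z)$, whereas the paper handles both somewhat elliptically; your treatment is in fact the more careful of the two.
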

\begin{proof}
We will look at the spatial difference first. Let $k=y-x$ and similarly to the proof of the above theorem we have  
\begin{align*}
\E|u_t^{(\beta)}(x+k)&-u_t^{(\beta)}(x)|^p\lesssim |(\mathcal{G}_Du_0)_t(x+k)-(\mathcal{G}_Du_0)_t(x)|^p\\
&+\left[\int_0^t \int_0^L [G_D(t-s,\,x+k,\,y)-G_D(t-s,\,x,\,y)]^2[\E|\sigma(u_s(y)))|^p]^{2/p}\,\d s\,\d y\right]^{p/2}\\
&:=I_1+I_2.
\end{align*}
Using \eqref{bound2}, we can bound the $I_2$ as follows
\begin{align*}
I_2\lesssim c_1|k|^{a p},
\end{align*}
where $a$ is some positive constant less than 1.
For the deterministic part $I_1$, we can use
\begin{align*}
|(\mathcal{G}_Du_0)_{t}(x+k)-(\mathcal{G}_Du_0)_t(x)|&\leq \left[\int_0^L |G_D(t,\,x+k,\,y)-G_D(s,\,x,\,y)|^2u_0(y)\,\d y\right]^{1/2}.
\end{align*}
Hence using \eqref{bound2} again, we have
\begin{align*}
I_1 \lesssim c_2|k|^{a p}.
\end{align*}
We now look at the temporal difference. Assume $t\geq s$ and set $t=s+h$.
\begin{align*}
\E&|u_{s+h}^{(\beta)}(x)-u_s^{(\beta)}(x)|^p\lesssim |(\mathcal{G}_Du_0)_{s+h}(x)-(\mathcal{G}_Du_0)_s(x)|^p\\
&+\E\left[\lambda \int_0^{s+h}\int_0^L G_D(s+h-l,\,x,\,y)\sigma(u_l^{\beta}(y))W(\d l,\d y)-\lambda\int_0^s\int_0^LG_D(s-l,\,x,\,y)\sigma(u_l^{\beta}(y)) W(\d l,\d y)\right]^p\\
&:=I_3+I_4.
\end{align*}
We look at $I_4$ first.
\begin{align*}
I_4&\lesssim \E\left[\lambda \int_0^{s}\int_0^L [G_D(s-l,\,x,\,y)-G_D(s+h-l,\,x,\,y)]\sigma(u_l^{\beta}(y))W(\d l,\d y)\right]^p\\
&+\E\left[\lambda \int_s^{s+h}\int_0^L G_D(s+h-l,\,x,\,y)\sigma(u_l^{\beta}(y))W(\d l,\d y)\right]^p\\
&:=I_5+I_6.
\end{align*}
We use \eqref{bound3} together with Burkholder's inquality as we have done earlier to obtain
\begin{align*}
I_5\lesssim c_3|h|^{bp}.
\end{align*}
The final term $I_6$ can be bounded in a similar fashion. Using \eqref{bound3}, we obtain the same bound as in the above display. We combine all these estimates to arrive at our desired result.
\end{proof}
\begin{proof}[Proof of Theorem \ref{Theo4}]
The proof consists of two main parts which we are going to only sketch. For more details, see \cite{Bezdek}. The first part is a consequence of the Theorem \ref{Theo3} and gives us convergence of the finite dimensional distributions. Indeed for any finite number of points $(t_i,\,x_i)\in [0,\,T]\times [-N,\,N]$, we have convergence in probability of 
\begin{align*}
(u_{t_1}^{(\beta)}(x_1),\,u_{t_2}^{(\beta)}(x_2),\cdots,u_{t_l}^{(\beta)}(x_l))\quad \text{to}\quad (u_{t_1}(x_1),\,u_{t_2}(x_2),\cdots,u_{t_l}(x_l))
\end{align*}
as $\beta \rightarrow 1.$ Tightness follows from Proposition \ref{difference}; indeed a similar argument to that used in \cite{Bezdek} show that
\begin{align*}
\lim_{\delta \rightarrow 0}\sup_{\beta\in(0,\,1)}\P \left(\sup_{|x-y|^{a}+|t-s|^b<\delta}|u_s^{(\beta)}(x)-u_t^{(\beta)}(y)|>\epsilon\right)=0
\end{align*}
We therefore have convergence of the finite dimensional distribution as well as tightness. The result therefore follows.
 \end{proof}

\bibliography{Foon}
\end{document}